\newcounter{ADD}
\newcounter{DEL}
\def\ADDin{\addtocounter{ADD}{1}}
\def\ADDout{\addtocounter{ADD}{-1}}
\def\DELin{\addtocounter{DEL}{1}}
\def\DELout{\addtocounter{DEL}{-1}}
\newcommand{\ifdraft}[2]{}
  \renewcommand{\ifdraft}[2]{#1}}{%
  \renewcommand{\ifdraft}[2]{#2}}
\newcommand{\ifADD}[2]{\ifnum\value{ADD}>0{#1}\else{#2}\fi}
\newcommand{\ifDEL}[2]{\ifnum\value{DEL}>0{#1}\else{#2}\fi}
\newcommand{\delmarkup}[1]{%
  \DELin%
  \textcolor{red!80!black}{\stkout{#1}}%
  \DELout%
}
\newcommand{\addmarkup}[1]{%
  \ADDin%
  \ifDEL{}{%
    \textcolor{green!50!black}{#1}}
  \ADDout%
}
  \newenvironment{addedenv}{%
    \ADDin%
    \color{green!50!black}}
  {%
    \ADDout%
    \color{black}}
  \newcommand{\deleteX}[1]{%
    \DELin%
    {\color{red!80!black}{\sout{#1}}}%
    \DELout}%
  \newcommand{\deleteX}[1]{}
\newcommand*{\ldblbrace}{\lbrace\mskip-5mu\lbrace}
\newcommand*{\rdblbrace}{\rbrace\mskip-5mu\rbrace}
\newcommand{\jump}[1]{\left\llbracket #1 \right\rrbracket}
\newcommand{\opext}{\mathcal{L}^{\operatorname{ext}}}
\newcommand{\average}[1]{\left\ldblbrace #1 \right\rdblbrace}
\newcommand{\scp}[2]{\left\langle #1, #2 \right\rangle}
\newcommand{\scpL}[2]{\left( #1, #2 \right)_{L^2}}
\newcommand{\Th}{%
  \mathcal{M}_h%
}
\newcommand{\ThBg}{{\widehat{\mathcal{M}}}_h}
\newcommand{\OmegaBg}{\widehat{\Omega}}
\newcommand{\dd}{\text{d}}
\newcommand{\GammaInt}{\Gamma_{\text{int}}}
\newcommand{\abso}[1]{\left\vert #1 \right\vert}
\newcommand{\norm}[1]{\left\lVert #1 \right\rVert}
\begin{document}

\title*{DoD stabilization for higher-order advection in two dimensions}
\titlerunning{DoD stabilization for higher-order advection}
\author{Florian Streitb\"urger, Gunnar Birke, Christian Engwer, Sandra May}

\institute{Sandra May, Florian Streitb\"urger, \at TU Dortmund University, Vogelpothsweg 87, 44227 Dortmund, Germany, \email{florian.streitbuerger@math.tu-dortmund.de, sandra.may@math.tu-dortmund.de}
\and Christian Engwer, Gunnar Birke \at University of M\"unster, Einsteinstraße 62, 48149 M\"unster,  Germany, \email{christian.engwer@uni-muenster.de, g\_birk01@uni-muenster.de}
\and
This work was partially supported by the Deutsche Forschungsgemeinschaft (DFG, German Research Foundation) under the project number 439956613 (HyperCut) under the contracts {MA~7773/4-1} and EN~1042/5-1 and under Germany's Excellence Strategy EXC 2044-390685587, Mathematics Münster: Dynamics–Geometry–Structure.
}
%
%
\maketitle

\abstract{When solving time-dependent hyperbolic conservation laws on cut cell meshes one has to overcome the small cell problem: standard explicit time stepping is not stable on small cut cells if the time step is chosen with respect to larger background cells. The domain of dependence (DoD) stabilization is designed to solve this problem in a discontinuous Galerkin framework. It adds a penalty term to the space discretization that restores proper domains of dependency. In this contribution we introduce the DoD stabilization for solving the advection equation in 2d with higher order. We show an $L^2$ stability result for the stabilized semi-discrete scheme for arbitrary polynomial degrees $p$ and provide numerical results for convergence tests indicating orders of $p+1$ in the $L^1$ norm and between $p+\frac 1 2$ and $p+1$ in the $L^{\infty}$ norm.
}
\section{Introduction}
%
%
Modern simulations often require to mesh complex geometries. One approach that is particularly suited for this purpose are embedded boundary meshes. One simply cuts the geometry out of a structured background mesh, resulting in \textit{cut cells} along the boundary of the embedded object. Cut cells have different shapes and can become arbitrarily small. In the context of solving time-dependent hyperbolic conservation laws this causes the \textit{small cell problem}: for standard explicit time stepping, the scheme is not stable on small cut cells when the time step is chosen with respect to the larger background cells. 

Existing solution approaches in a finite volume regime are typically bound to at most second order, see for example the flux redistribution method \cite{Chern_Colella,Colella2006}, the $h$-box method \cite{Berger_Helzel_Leveque_2005,Berger_Helzel_2012}, the mixed explicit-implicit scheme \cite{May_Berger_explimpl}, the dimensionally split approach \cite{Klein_cutcell,Klein_cutcell_3d}, or the state redistribution (SRD) method \cite{Berger_Giuliani_2021}. An exception is the extension of the active flux method to cut cell meshes \cite{FVCA_Helzel_Kerkmann}, which aims for third order. 

For discontinuous Galerkin (DG) schemes it is significantly easier to achieve higher order. The development of DG schemes that overcome the small cell problem has only started very recently. Some work relies on cell merging, e.g. \cite{Kronbichler2020}, other work on algorithmic solution approaches such as the usage of a ghost penalty term as done by Fu and Kreiss \cite{Kreiss_Fu} or the extension of the SRD method to a DG setting by Giuliani \cite{Giuliani_DG}. 

Another approach, proposed previously by the authors, is the \textit{Domain-of-Dependence} (DoD) stabilization. In this approach, a penalty term is added on small cut cells that restores the proper domains of dependence in the neighborhood of small cut cells and therefore makes standard explicit time stepping stable again. In \cite{DoD_SIAM_2020} we first introduced the DoD stabilization for linear advection in 1d and 2d for linear polynomials only. In \cite{DoD_AMC_2021}, we extend the stabilization in 1d to non-linear systems and higher order. For the extension to higher order in 1d we found that it is necessary to add an extra term in the stabilization, which adjusts the mass distribution within inflow neighbors of small cut cells. With this term it is possible to show an $L^2$ stability result for the semi-discrete setting (keeping the time continuous) in 1d \cite{DoD_AMC_2021}. 

In this contribution, we partially extend the 1d results from \cite{DoD_AMC_2021} to 2d by solving linear advection with higher order polynomials. For the case of a planar ramp geometry we show an $L^2$ stability result for the semi-discrete setting. We will also provide corresponding numerical results. These results show the expected convergence orders of $p+1$ for polynomial degree $p$ in the $L^1$ norm. In the $L^{\infty}$ norm, we observe a slight decay, resulting in convergence orders between $p+\frac 1 2$ and $p+1$.

\section{Problem setup}
Within the scope of this work, we will focus on the 2d linear advection equation
\begin{align}\label{eq: lin adv 2d}
      u_t+\scp{\beta}{\nabla u}&=0&\quad&\text{in }\Omega \times (0,T),\\
  u &=g & &\text{on }\partial\Omega^{\text{in}} \times (0,T),\\
  u &= u_0 & &\text{on }\Omega \times \{ t = 0 \}.
\end{align}
We denote by $\Omega$ an open, connected domain in $\mathbb{R}^2$ and by $\partial \Omega$ its boundary. The inflow boundary is defined as $\partial\Omega^\text{in}:=\{x\in \partial \Omega : \scp{\beta(x)}{n(x)}<0\}$ with $n\in \mathbb{R}^2$ being the outer unit normal vector on $\partial \Omega$. Analogously, we define
$\partial\Omega^\text{out}:=\{x\in \partial \Omega : \scp{\beta(x)}{n(x)}>0\}$.
Moreover, $\beta\in\mathbb{R}^2$ is the velocity field and $\scp{\cdot}{\cdot}$ the standard scalar product in $\mathbb{R}^2$. 

For simplicity and brevity of presentation, we will focus in this presentation on the case of a ramp geometry with a constant velocity field $\beta \in \mathbb{R}^2$, which is parallel to the ramp. The geometry setup and mesh creation is sketched in figure \ref{fig:mesh-geom-intersect}.
We refer to the internal and external faces of our mesh $\Th$ as
\begin{align}
  \GammaInt &=
  \left\{ e_{E_1,E_2} = \partial E_1 \cap \partial E_2
    \ \left|\  E_1, E_2 \in \Th
    \ \text{and}\  E_1 \neq E_2
    \ \text{and}\ \vert e_{E_1,E_2}\vert >0\right.\right\}, 
  \label{eq:int_skel} \\
  \Gamma_{\text{ext}} &=
  \left\{ e_{E} = \partial E \cap \partial\Omega
    \ \left|\ E \in \Th
    \ \text{and}\ \vert e_E \vert >0\right.\right\},
    \label{eq:ext_skel}
    \end{align}
    with $\vert e \vert$ denoting the length of an edge $e$. We then further split $\Gamma_{\text{ext}}$ in $\Gamma_{\text{ext,Cart}}$ and $\Gamma_{\text{ext,ramp}}$: $\Gamma_{\text{ext,Cart}}$ contains all Cartesian boundary faces and $\Gamma_{\text{ext,ramp}}$ contains the boundary faces along the ramp that were created by cutting out the ramp geometry. 

\begin{figure}[tp]
  \centering
  \begin{tikzpicture}[scale=1.8]
    \node at (.75,1.15) {\footnotesize$\ThBg$};
    \draw[semithick,step=0.25] (0,0) grid (1.5,1);
    \node at (1.9,0.5) {\huge$\cap$};
\begin{scope}[xshift=2.3cm]
    \node at (.75,1.15) {\footnotesize$\overline{\Omega}$};
    \draw[semithick,fill=black!10!white]
    (0.0,0.0) --
    (0.3,0.0) --
    (1.5,0.7) --
    (1.5,1.0) --
    (0.0,1.0) --
    (0.0,0.0);
    \draw[dashed](0.3,0) -- +(0:0.7) arc (-45:45:0.29) -- cycle;
    \node[] at (0.9,0.15) {\large $\gamma$} ;
\end{scope}
    \node at (4.2,0.5) {\huge$\rightarrow$};
\begin{scope}[xshift=4.6cm]
    \node at (.75,1.15) {\footnotesize$\Th$};
    \draw[semithick,step=0.25] (0,0) grid (1.5,1);
    \fill[white] (0.3,0.0) --
      (1.5,0.7) --
      (1.51,0.7) --
      (1.51,-0.01) --
      (0.3,-0.01);
    \draw[semithick]
    (0.0,0.0) --
    (0.3,0.0) --
    (1.5,0.7) --
    (1.5,1.0) --
    (0.0,1.0) --
    (0.0,0.0);
\end{scope}
\end{tikzpicture}
  \caption{Construction of cut cell mesh $\Th$ for the case of a ramp geometry: We intersect the structured background mesh $\ThBg$ of a larger rectangular domain $\OmegaBg \supset \Omega$ with the domain $\overline{\Omega}$. Here, $\Omega$ has a ramp geometry described by the angle $\gamma$. This results in cut cells $E = \widehat E \cap \overline{\Omega}$ along the ramp, where $\widehat
    E \in \ThBg$ is an element of the background mesh.}
  \label{fig:mesh-geom-intersect}
\end{figure}
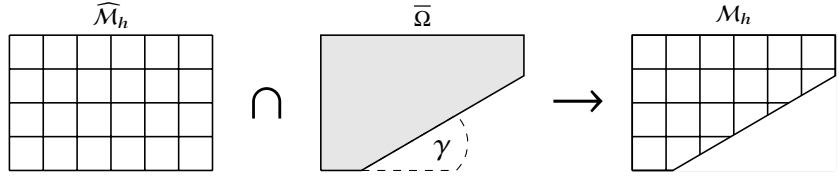

On the partition $\Th$, we define the discrete function space
$\mathcal{V}_h^p(\Th) \subset L^2(\Omega)$ by
\begin{equation}\label{eq: def V_h}
   \mathcal{V}_h^p(\Th) = \left\{ v_h \in L^2(\Omega)  \: \vert \: \forall E \in \Th, v_h{\vert_E} \in P^p(E) \right\},
\end{equation}
where $P^p$ denotes the polynomial space of degree $p$. 

On a face $e$ between two adjacent cells $E_1$ and $E_2$, i.e., $e = \partial{E_1}\cap\partial{E_2}$, we define the scalar-valued 
\emph{average} as
  \begin{align*}
    \average{u_h}=\frac 1 2({u_h}\vert_{_{E_1}}+{u_h}\vert_{_{E_2}}),
  \end{align*}
  and the \emph{jump} to be vector-valued as
  \begin{align}\label{eq: jump in 2d}
    \jump{u_h}:={u_h}\vert_{_{E_1}}n_{E_1}+{u_h}\vert_{_{E_2}}n_{E_2},
  \end{align}
  with $n_{E_i}$ denoting the outer unit normal vector of cell $E_i$, $i=1,2$. With these prerequisites we can now define the scheme that we use to solve \eqref{eq: lin adv 2d}.

We use a method of lines approach: we first discretize in space and then in time. The unstabilized semi-discrete scheme is given by: Find $u_h(t) \in  \mathcal{V}^p_h(\Th)$ such that
\begin{align}\label{eq: scheme 2d wo stab}
  \scpL{d_tu_h(t)}{w_h}+a_h^{\text{upw}}\!\left(u_h(t), w_h\right)+l_h\left(w_h\right) = 0\quad\forall w_h\in \mathcal{V}_h^p(\Th),
\end{align}
with
\begin{align}
  &\begin{aligned}
      a_h^{\text{upw}}(u_h, w_h):=&-\sum_{E \in \Th} \int_E u_h\scp{\beta}{\nabla_hw_h}\dd{x} + \sum_{e \in \Gamma_{\text{ext}}} \int_{e} \scp{\beta}{n}^{\oplus}u_hw_h \dd{s}\\
      & +\sum_{e \in \GammaInt}\int_{e} \left( \average{u_h}\scp{\beta}{\jump{w_h}}
    + \frac{1}{2}\abso{\scp{\beta}{n_e}}\scp{\jump{u_h}}{\jump{w_h}} \right) \dd{s},
    \end{aligned}
    \label{eq:aupw}\\
    &\begin{aligned}
      l_h(w_h):=&-\sum_{e \in \Gamma_{\text{ext}}} \int_{e}\scp{\beta}{n}^\ominus g \: w_h\dd{s}.\label{eq: l_h 2d}
    \end{aligned}
\end{align}
Here, $\scpL{\cdot}{\cdot}$ denotes the standard scalar product in $L^2(\Omega)$ and $n_e\in\mathbb{R}^2$ is a unit normal on a face $e$ (of arbitrary but fixed orientation). We define the negative and positive parts of 
$x\in\mathbb{R}$ as
$
  x^\ominus:=\frac{\abso{x}-x}{2}$ and $x^\oplus:=\frac{\abso{x}+x}{2}.
 $ 
Note that that the standard upwind flux is used in the definition of $a_h^{\text{upw}}$. 

The proposed stabilization modifies the \emph{space} discretization, whereas in time we are free to use a time stepping scheme of our choice. We will use standard explicit strong stability preserving (SSP) Runge Kutta (RK) schemes \cite{GottliebShuTadmor}.

\section{Stabilization terms}
    The stabilization is designed as an additional term $J_h$ that is added to the semi-discrete formulation in \eqref{eq: scheme 2d wo stab}. The DoD stabilized semi-discrete scheme is then given by: Find $u_h(t) \in \mathcal{V}^p_h(\Th)$ such that
    \begin{equation}\label{eq:stabilized scheme}
    \begin{split}
        \scpL{d_tu_h(t)}{w_h} + a_h^{\text{upw}}\!\left(u_h(t), w_h\right)
        &+J_h(u_h(t),w_h)\\
        &+l_h\left(w_h\right) = 0\qquad\forall w_h\in \mathcal{V}_h^p(\Th).
    \end{split}
    \end{equation}
The stabilization term $J_h$ is given by
\[
    J_h(u_h,w_h) = J_h^0(u_h,w_h)+J_h^1(u_h,w_h)= \sum_{E \in \mathcal{I}} \left( J_h^{0,E}(u_h,w_h) + J_h^{1,E}(u_h,w_h) \right).
\]
We define $J_h^{0,E}$ and $J_h^{1,E}$ in detail below. 
The set $\mathcal{I}$ denotes the set of small cut cells that need stabilization.
For a planar cut in 2d, there are 3-sided, 4-sided, and 5-sided cut cells. In \cite{DoD_SIAM_2020}, we have shown  (see Lemma 3.5), that for the considered setup, it is sufficient to stabilize \textit{triangular cut cells} only. For a triangular cut cell $E_{\text{cut}}$ in our setup, each edge has a different boundary condition, see figure \ref{fig: triangular cell}:
\begin{itemize}
    \item On the \textit{boundary edge} $e_{\text{bdy}}$ we have a no-flow boundary condition as the flow is parallel to the ramp.
    \item Out of the two remaining edges, one edge is the \textit{inflow edge} $e_\text{in}$, which is
    characterized by $\scp{\beta}{n_{E_{\text{cut}}}}<0$. 
    \item The remaining edge is the \textit{outflow edge} $e_\text{out}$, which is characterized by $\scp{\beta}{n_{E_{\text{cut}}}}\geq 0$.
\end{itemize}
%
Thus we can uniquely define an inflow neighbor $E_\text{in}$ and an outflow neighbor
$E_\text{out}$ for a triangular cut cell $E_{\text{cut}}$.

\begin{figure}[t]
    \centering
    \begin{tikzpicture}[color=black,semithick,scale=1.4]
\fill[pattern=north east lines, pattern color=black!50!white]  (-0.8,-1) -- (1,0.8) -- (1,-1) -- (-0.8,-1);
\fill[color=black!10!white]  (-0.5,0.5) -- (0.7,0.5) -- (-0.5,-0.7) -- (-0.5,0.5);
\draw[] (-1,0.5) -- (0.7,0.5);
\draw[] (-0.5,1) -- (-0.5,-0.7);
\draw[very thick] (-0.8,-1) -- (1,0.8);
\draw (-0.5,0.5) rectangle (1,-1);
\draw (1,-1) -- (-1.,-1);
\draw (1,-1) -- (1,1);
\draw[->, very thick] (-1.5,-0.2) -- (-1,0.3);
\node[anchor=east] at (-1.2,0.2) {${\beta}$};
\node[anchor=east] at (-0.5,-0.125) {${E_\text{in}}$};
\node[anchor=south] at (0.125,0.5) {${E_\text{out}}$};
\node[anchor=south] at (-0.1,0) {${E_\text{cut}}$};
\node[anchor=west] at (1.3,-0.3) {$e_\text{bdy}$}; 
\node[anchor=west] at (-0.5,-0.125) {$e_\text{in}$};
\draw[->, very thick] (1.3,-0.3) -- (0.2,-0.1);
\node[anchor=north] at (0.125,0.5) {$e_\text{out}$};
\end{tikzpicture}
\caption{Triangular cut cell}
\label{fig: triangular cell}
\end{figure}
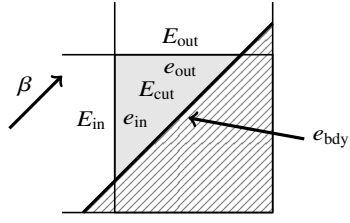

If the time step is chosen according to the size of the larger background cells and does not respect the size of small cut cells, then, physically, mass passes within one time step from the inflow cell $E_{\text{in}}$ through the small cut cell $E_{\text{cut}}$ into the outflow cell $E_{\text{out}}$. The idea behind the DoD stabilization is to make this possible by directly passing part of the mass that enters $E_{\text{cut}}$ from $E_{\text{in}}$ into the outflow neighbor $E_{\text{out}}$. This way we restore the domain of dependence of the outflow neighbor $E_{\text{out}}$ and make sure that the small cut cell $E_{\text{cut}}$ only keeps as much mass as it can hold. For the latter one we define the concept of \textit{capacity} below.

    In order to create this flux of information between the inflow neighbor and the outflow neighbor of $E_{\text{cut}}$, we introduce an extension operator: The operator $\opext_{E'}$
  extends a function $u_h \in \mathcal{V}^p_h$ from a cell $E'\in\Th$ to the whole
  domain $\Omega$. This simply corresponds to evaluating a polynomial outside its original support. In particular, we will evaluate the polynomial solution defined on cell $E_{\text{in}}$ to $\overline{E_{\text{cut}}}$. We will refer to this both as $\opext_{E_\text{in}}(u_h)(x)$,
  $x \in \overline{E_{\text{cut}}}$, as well as simply as $u_{E_{\text{in}}}(x)$ to ease notation.
    
    With these prerequisites we can now define $J_h^0$ and $J_h^1$. 
Generally, the terms of the DoD stabilization
target two different goals:
    \begin{itemize}
        \item $J_h^0$ aims for redistributing the mass \textit{among} the small cut cells and their neighbors appropriately. It therefore consists of \textit{cell interface} terms.
        \item $J_h^1$ aims for redistributing the mass \textit{within} the small cut cells and their neighbors appropriately. It therefore consists of \textit{volume} terms.
    \end{itemize}
The term $J_h^{0,E}$ is given by
\begin{equation}
    J_h^{0,E}(u_h,w_h) = \eta_E \int_{e_\text{out}} (\opext_{E_\text{in}}(u_h)-u_h) \scp{\beta}{\jump{w_h}}\dd s,
\end{equation}
with the stabilization parameter $\eta_E$ defined below. Note that we only redistribute mass across outflow edges of small cut cells and that we use the extended solution of the inflow neighbor to determine the size of the correction. 
The term $J_h^{1,E}$ is given by
\begin{equation}
    J_h^{1,E}(u_h,w_h) = \eta_E \int_E (\opext_{E_\text{in}}(u_h)-u_h) \scp{\beta}{\opext_{E_\text{in}}(\nabla w_h)-\nabla w_h} \dd x.
\end{equation}
This term is designed to adjust the mass distribution primarily within the small cut cell $E$ and secondarily within its neighbor. Note that we apply the extension operator to both the discrete solution and the test function from inflow neighbor $E_{\text{in}}$. In \cite{DoD_SIAM_2020}, where we only considered piecewise linear polynomials, we proposed a different formulation of $J_h^{1,E}$, which did not contain the term $\opext_{E_\text{in}}(\nabla w_h)$. In 1d \cite{DoD_AMC_2021}, we found that when going to higher order one can run into instabilities without this extra term. In addition, with this augmented definition of $J_h^{1,E}$ we are able to show an $L^2$ stability result for the semi-discrete scheme, which we will present below.

Both stabilization terms are scaled with the stabilization parameter $\eta_E$. We set $\eta_E = 1-\alpha_{E,1/(2p+1)}$ with the capacity $\alpha_{E,\omega}$ and $p$ being the polynomial degree of the discrete function space.  We define the capacity of a cut cell $E$, see \cite{DoD_SIAM_2020}, as
  \begin{align}\label{eq:def:alphaE:omega}
    \alpha_{E,\omega}:= \min\left(\omega \frac{\abso{E}}{\Delta t \int_{\partial E}\scp{\beta}{n_{E}}^\ominus\dd{s}},1\right), \quad \omega \in (0,1].
  \end{align}
  For $\omega=1$, the capacity estimates the fraction of the inflow
  that is allowed to flow into the cut cell $E$ and stay there without producing overshoot.
  Note that by definition $0 \le \eta_E \le 1$.

\subsection{$L^2$ stability for semi-discrete scheme}
In the following we will show an $L^2$ stability result for the stabilized semi-discrete scheme for an arbitrary polynomial degree $p$. Generally, the $L^2$ stability result for the considered ramp setup is influenced by the inflow and outflow across $\partial\Omega^{\text{in}}$ and $\partial\Omega^{\text{out}}$ during the time $(0,T)$. Note that only Cartesian faces $e \in \Gamma_{\text{ext,Cart}}$ are contained in $\partial\Omega^{\text{in}} \cup \partial\Omega^{\text{out}}$ as we have a no-flow boundary condition for faces
$e \in \Gamma_{\text{ext,ramp}}$ along the ramp. 
Our goal here is to show that $L^2$ stability still holds true for the stabilized scheme \textit{with cut cells being present}, and not to analyze the influence of the inflow and outflow on the $L^2$ stability. We will therefore for simplicity assume that the solution has compact support inside $\hat{\Omega}$
 during the considered time interval $(0,T)$ and does not intersect the Cartesian boundary, i.e. $\text{supp}(u)\cap (\partial\Omega^{\text{in}} \cup \partial\Omega^{\text{out}}) = \emptyset$, which implies that we have a homogeneous right hand side during the whole time frame~$(0,T)$ and in particular that
 there is no in- or outflow.
%

\begin{theorem}\label{theorem: l2 stability}
Consider the advection equation \eqref{eq: lin adv 2d} for the setup of a ramp with constant velocity field $\beta = (\beta_1,\beta_2)^T$. Let the solution $u$ have compact support during the considered time interval $(0,T)$ and $\text{supp}(u)\cap (\partial\Omega^{\text{in}} \cup \partial\Omega^{\text{out}}) = \emptyset$. Let $u_h(t)$, with $u_h(t) \in \mathcal{V}^p_h$ for any fixed $t$, be the solution to the stabilized semi-discrete problem \eqref{eq:stabilized scheme}. 
Then, the solution satisfies for all $t\in (0,T)$
\begin{equation*}
    \norm{u_h(t)}_{L^2(\Omega)} \leq \norm{u_h(0)}_{L^2(\Omega)}.
\end{equation*}
\end{theorem}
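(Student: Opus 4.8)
The plan is to establish $L^2$ stability in the standard energy-method way for semi-discrete DG schemes: test the stabilized weak formulation \eqref{eq:stabilized scheme} against the discrete solution itself, i.e.\ set $w_h = u_h(t)$, and show that the resulting quadratic form has the right sign. Since $\scpL{d_t u_h}{u_h} = \tfrac12 \frac{d}{dt}\norm{u_h}_{L^2(\Omega)}^2$ and the data term $l_h(u_h)$ vanishes by the compact-support assumption (no inflow/outflow across $\partial\Omega^{\text{in}}\cup\partial\Omega^{\text{out}}$), the claim reduces to proving the coercivity-type estimate
\begin{equation*}
  a_h^{\text{upw}}(u_h,u_h) + J_h(u_h,u_h) \geq 0 \qquad \text{for all } u_h \in \mathcal{V}_h^p(\Th),
\end{equation*}
since then $\frac{d}{dt}\norm{u_h(t)}_{L^2}^2 \le 0$ and integrating from $0$ to $t$ gives the result.

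\emph{First} I would handle the unstabilized part. Inserting $w_h = u_h$ into \eqref{eq:aupw}, the volume term $-\sum_E \int_E u_h \scp{\beta}{\nabla_h u_h}\,\dd x = -\sum_E \tfrac12 \int_E \scp{\beta}{\nabla_h u_h^2}\,\dd x$ is integrated by parts cell-by-cell to convert it into boundary integrals; combined with the interior-face average and jump terms and the exterior-face upwind terms, the standard DG algebra collapses $a_h^{\text{upw}}(u_h,u_h)$ into a sum of manifestly nonnegative jump-dissipation contributions $\tfrac12\sum_{e\in\GammaInt}\int_e \abso{\scp{\beta}{n_e}}\,\abso{\jump{u_h}}^2\,\dd s$ plus boundary outflow terms that are nonnegative (the inflow boundary terms drop by compact support). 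This is the routine part and I expect no surprises there.

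\emph{The hard part will be} controlling the stabilization contribution $J_h(u_h,u_h) = \sum_{E\in\mathcal{I}}\big(J_h^{0,E}(u_h,u_h) + J_h^{1,E}(u_h,u_h)\big)$, because the penalty terms are \emph{not} obviously sign-definite: they mix the cut-cell polynomial $u_h$ with the extended inflow polynomial $u_{E_\text{in}} = \opext_{E_\text{in}}(u_h)$ across the outflow edge and inside the cut cell, so one gets indefinite cross terms of the form $\int (u_{E_\text{in}} - u_h)(\dots)$. My strategy would be to work cut-cell by cut-cell and, on each triangular cut cell $E_\text{cut}$, assemble $J_h^{0,E} + J_h^{1,E}$ together with the \emph{portion} of $a_h^{\text{upw}}$ associated with the edges $e_\text{in}$, $e_\text{out}$ and the volume of $E_\text{cut}$. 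The key algebraic idea, mirroring the 1d argument in \cite{DoD_AMC_2021}, is that the augmented $J_h^{1,E}$ — which now includes the $\opext_{E_\text{in}}(\nabla w_h)$ term — is exactly what is needed to complete the indefinite pieces into a perfect square. I would introduce the difference $d_h := u_{E_\text{in}} - u_h$ on $\overline{E_\text{cut}}$, rewrite the extended gradient/volume terms through integration by parts on $E_\text{cut}$ so that the $J_h^{1,E}$ volume integral recombines with the $J_h^{0,E}$ outflow-edge integral, and show the total equals a nonnegative weighted quadratic in $d_h$ and $\jump{u_h}$, with the weighting governed by the capacity through $\eta_E \in [0,1]$.

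\emph{Finally}, I would verify that the factor $\eta_E = 1 - \alpha_{E,1/(2p+1)}$ and the choice $\omega = 1/(2p+1)$ in the capacity \eqref{eq:def:alphaE:omega} are precisely calibrated so that the amount of dissipation the scheme borrows on the cut cell never exceeds what the genuine upwind jump dissipation can supply; this is where the polynomial-degree-dependent $\omega$ enters, presumably via an inverse/trace inequality bounding an $L^2$ volume norm on the small cut cell by its edge contributions with a constant scaling like $2p+1$. Summing the nonnegative per-cut-cell contributions over $E\in\mathcal{I}$ and adding the leftover nonnegative bulk jump terms from the first step yields $a_h^{\text{upw}}(u_h,u_h) + J_h(u_h,u_h)\ge 0$, completing the proof. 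I expect the main obstacle to be the bookkeeping that shows the extension-operator cross terms recombine into a square rather than merely being bounded, and pinning down the sharp constant that forces the specific value $\omega = 1/(2p+1)$.
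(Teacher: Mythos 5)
Your proposal follows essentially the same route as the paper: test with $w_h = u_h$, reduce to showing $a_h^{\text{upw}}(u_h,u_h) + J_h(u_h,u_h) \ge 0$, collapse $a_h^{\text{upw}}(u_h,u_h)$ into the jump-dissipation sum, and then, cut cell by cut cell, recognize the $J_h^{1,E}$ integrand as the divergence $\nabla\cdot\bigl(\tfrac12\beta\,(u_{E_\text{in}}-u_{E_\text{cut}})^2\bigr)$, convert it to edge integrals over $e_\text{in}$ and $e_\text{out}$, and recombine with $J_h^{0,E}$ and the upwind jumps so that everything completes into squares weighted by $\eta_E\in[0,1]$. That is exactly the paper's argument, and your identification of the augmented $\opext_{E_\text{in}}(\nabla w_h)$ term as the ingredient enabling the perfect-square structure is on target.

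One correction, though: your anticipated final step is a red herring. The proof needs \emph{only} the bound $0 \le \eta_E \le 1$, which holds by definition \eqref{eq:def:alphaE:omega} for \emph{any} $\omega\in(0,1]$ (because of the $\min(\cdot,1)$ and $\omega\le 1$); no inverse or trace inequality appears, and no sharp constant "forces" $\omega = 1/(2p+1)$. The mechanism is purely algebraic: the negative $e_\text{in}$ contribution from $J_h^{1,E}$ and the negative $e_\text{out}$ contribution (the standard jump $\tfrac{\eta_E}{2}\int_{e_\text{out}}\beta_2(u_{E_\text{cut}}-u_{E_\text{out}})^2$) are each exactly $\eta_E$ times a jump term already present in \eqref{eq: L2 stab a_h}, so $\eta_E\le 1$ suffices to absorb them, leaving $(1-\eta_E)$-weighted standard jumps plus the new nonnegative extended jump $\tfrac{\eta_E}{2}\int_{e_\text{out}}\beta_2(u_{E_\text{in}}-u_{E_\text{out}})^2$. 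The specific choice $\omega = 1/(2p+1)$ is motivated by the fully discrete scheme (time-step restrictions scaling like $1/(2p+1)$, cf. \eqref{eq: dt in 2d}), not by the semi-discrete $L^2$ stability; had you searched for a $p$-dependent trace-inequality constant here, you would have found none is needed.
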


\begin{proof}
Setting $w_h = u_h(t)$ in \eqref{eq:stabilized scheme} and ignoring boundary contributions with respect to $\partial\Omega^{\text{in}}$, we get
\begin{equation*}
\scpL{d_tu_h(t)}{u_h(t)}+a_h^{\text{upw}}\left(u_h(t), u_h(t)\right)+J_h(u_h(t),u_h(t)) = 0.
\end{equation*}
Integration of the first term in time yields
\begin{align*}
\int_{0}^t \scpL{d_{\tau} u_h(\tau)}{u_h(\tau)} \: \dd \tau &= 
\int_0^t \frac{d}{d\tau} \frac{1}{2} \norm{u_h(\tau)}_{L^2(\Omega)}^2 \dd \tau \\
&= \frac 1 2 \norm{u_h(t)}_{L^2(\Omega)}^2 - \frac 1 2  \norm{u_h(0)}_{L^2(\Omega)}^2,
\end{align*}
and it remains to show that for any fixed $t$
\begin{equation*}
a_h^{\text{upw}}(u_h(t), u_h(t)) +  J_h(u_h(t),u_h(t)) \ge 0.
\end{equation*}
We will first discuss $a_h^{\text{upw}}$ and then $J_h$. (We will drop the explicit time dependency in the following for brevity.)

By definition of $a_h^{\text{upw}}$ and ignoring outflow across $\partial\Omega^{\text{out}}$, there holds 
\begin{align*}
      a_h^{\text{upw}}(u_h, u_h)=&-\sum_{E \in \Th} \int_E u_h\scp{\beta}{\nabla_h u_h}\dd{x} \\
      & +\sum_{e \in \GammaInt}\int_{e} \left( \average{u_h}\scp{\beta}{\jump{u_h}}
    + \frac{1}{2}\abso{\scp{\beta}{n_e}}\scp{\jump{u_h}}{\jump{u_h}} \right) \dd{s}.
\end{align*}
For the integral term we rewrite 
\[
- \int_E u_h\scp{\beta}{\nabla_h u_h}\dd{x} = 
- \int_E \nabla \cdot \left(\frac 1 2 \beta u_h^2 \right) \dd{x}
= - \int_{\partial E} \left( \frac 1 2 \beta u_h^2 \right) \cdot n \: \dd{s}.
\]
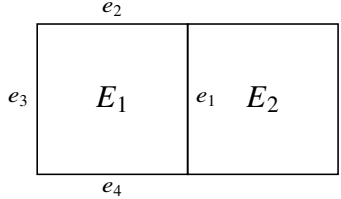
\begin{figure}[b]
    \centering
    \begin{tikzpicture}[color=black,semithick,scale=1.0]
\draw (-2,-1) rectangle (0,1);
\draw (0,-1) rectangle (2,1);
\node[] at (-1,0) {\large $E_1$};
\node[] at (1,0) {\large $E_2$};
\node[anchor=west] at (0,0) {$e_1$};
\node[anchor=south] at (-1,1) {$e_2$};
\node[anchor=east] at (-2,0) {$e_3$};
\node[anchor=north] at (-1,-1) {$e_4$};
\end{tikzpicture}
%
%
\caption{Setup for Cartesian cells}
\label{fig: cells l2 stab}
\end{figure}%
Let us first consider a standard Cartesian cell $E_1$ with edges as shown in figure \ref{fig: cells l2 stab}. Then, for $\beta = (\beta_1,\beta_2)^T$, there holds
\[
- \int_{E_1}\!\!u_h\scp{\beta}{\nabla_h u_h}\dd{x} 
= - \int_{e_1} \!\frac 1 2  \beta_1u_h^2 \: \dd{s} -  \int_{e_2}\!\frac 1 2  \beta_2 u_h^2 \: \dd{s}
+ \int_{e_3}\!\frac 1 2  \beta_1 u_h^2 \: \dd{s} +  \int_{e_4}\!\frac 1 2 \beta_2 u_h^2 \: \dd{s}.
\]
For the edge terms in $a_h^{\text{upw}}$, let us consider the internal edge $e_1$, connecting $E_1$ and $E_2$. Then, (using from now on the notation $u_{E'}$ to indicate that we evaluate the discrete solution from cell $E'$, potentially outside of its original support)
\begin{align*}
    \int_{e_1} &\left( \average{u_h}\scp{\beta}{\jump{u_h}}
    + \frac{1}{2}\abso{\scp{\beta}{n_{e_1}}}\scp{\jump{u_h}}{\jump{u_h}} \right) \dd{s}\\
    &= \int_{e_1} \left(\frac 1 2 \beta_1({u_{E_1}}+{u_{E_2}}) ({u_{E_1}}- {u_{E_2}}) + \frac 1 2 \beta_1({u_{E_1}}- {u_{E_2}})^2 \right) \dd{s} \\ 
    &= \int_{e_1} \beta_1\left( ({u_{E_1}})^2 - {u_{E_1}} {u_{E_2}} \right) \dd{s}.
\end{align*}%
Combining this with the corresponding contributions for edge $e_1$ from the volume terms from cells $E_1$ and $E_2$, we get
\begin{align*}
   - \int_{e_1} \frac 1 2 \beta_1({u_{E_1}})^2 \: \dd{s} 
   +& \int_{e_1} \frac 1 2 \beta_1({u_{E_2}})^2 \: \dd{s}\\
   +& \int_{e_1} \left( \average{u_h}\scp{\beta}{\jump{u_h}}
   + \frac{1}{2}\abso{\scp{\beta}{n_{e_1}}}\scp{\jump{u_h}}{\jump{u_h}} \right) \dd{s}\\
   &= \int_{e_1} \left( \frac 1 2 \beta_1({u_{E_1}})^2 - \beta_1{u_{E_1}}{u_{E_2}} + \frac 1 2 \beta_1({u_{E_2}})^2\right) \dd{s} \\
   &= \int_{e_1} \frac 1 2 \beta_1\left( {u_{E_1}} - {u_{E_2}} \right)^2 \dd{s}.
\end{align*}%
%
%
%
Let us now add the cut cells. For the small triangular cut cell $E_\text{cut}$ with the notation from figure \ref{fig: triangular cell}, we get with $\beta=(\beta_1,\beta_1)^T$
\[
- \int_{E_\text{cut}} u_h\scp{\beta}{\nabla_h u_h}\dd{x} 
= - \int_{\partial E_\text{cut}} \left( \frac 1 2 \beta u_h^2 \right) \cdot n \: \dd{s}
= - \int_{e_\text{out}}  \frac 1 2 \beta_2 u_h^2  \: \dd{s} + \int_{e_\text{in}}  \frac 1 2 \beta_1 u_h^2  \: \dd{s}.
\]%
Therefore, taking the boundary term in $a_h^{\text{upw}}$ into account as well as the contribution from the volume term of cell $E_{\text{in}}$, we get for the edge $e_{\text{in}}$
\begin{align*}
   - \int_{e_\text{in}} \frac 1 2  \beta_1({u_{E_\text{in}}})^2 \: \dd{s} 
   +& \int_{e_\text{in}} \frac 1 2 \beta_1({u_{E_\text{cut}}})^2 \: \dd{s}\\
   +& \int_{e_\text{in}} \left( \average{u_h}\scp{\beta}{\jump{u_h}}
   + \frac{1}{2}\abso{\scp{\beta}{n_{e_\text{in}}}}\scp{\jump{u_h}}{\jump{u_h}} \right) \dd{s}\\
   &= \int_{e_\text{in}} \left( \frac 1 2 \beta_1({u_{E_\text{in}}})^2 - \beta_1{u_{E_\text{in}}} {u_{E_\text{cut}}} + \frac 1 2 \beta_1({u_{E_\text{cut}}})^2\right) \dd{s} \\
   &= \int_{e_\text{in}} \frac 1 2 \beta_1\left( {u_{E_\text{in}}} - {u_{E_\text{cut}}} \right)^2 \dd{s}.
\end{align*}%
We obtain a similar term for edge $e_\text{out}$, involving solutions from cells $E_\text{cut}$ and $E_\text{out}$. 
Therefore, ignoring boundary contributions across $\partial\Omega^{\text{in}}\cup\partial\Omega^{\text{out}}$ due to the assumption of compact support, there holds
\begin{equation}\label{eq: L2 stab a_h}
a_h^{\text{upw}}(u_h, u_h) = \sum_{e \in \GammaInt} \frac 1 2 \int_e \abso{\scp{\beta}{n_e}}
\scp{\jump{u_h}}{\jump{u_h}} \: \dd{s}.
\end{equation}
Therefore, without the stabilization term $J_h$, there holds $L^2$ stability.

Let us now add the stabilization term
\begin{align*}
    J_h(u_h,u_h) &= \sum_{E \in \mathcal{I}} J_h^{0,E}(u_h,u_h) + J_h^{1,E}(u_h,u_h).
\end{align*}
We only stabilize small triangular cells of type $E_\text{cut}$. There holds
\begin{align*}
    J_h^{0,E_\text{cut}}&(u_h,u_h) = \eta_{E_\text{cut}} \int_{e_{\text{out}}} ({u_{E_{\text{in}}}}-{u_{E_\text{cut}}}) \scp{\beta}{\jump{u_h}}\dd s \\
    &= \eta_{E_\text{cut}} \int_{e_{\text{out}}}\beta_2 ({u_{E_{\text{in}}}}-{u_{E_\text{cut}}})({u_{E_\text{cut}}}-{u_{E_{\text{out}}}}) \dd s \\
    &= \eta_{E_\text{cut}} \int_{e_\text{out}} \beta_2
    \left(
    {u_{E_\text{in}}}{u_{E_\text{cut}}} - 
    {u_{E_\text{in}}} {u_{E_\text{out}}} -
    ({u_{E_\text{cut}}})^2 + {u_{E_\text{cut}}}{u_{E_\text{out}}}
    \right)\dd s.
\end{align*}
We now consider $J^{1,E_\text{cut}}$ given by
\[
    J_h^{1,E_\text{cut}}(u_h,u_h) = \eta_{E_\text{cut}} \int_{E_\text{cut}} (u_{E_\text{in}}-u_{E_\text{cut}}) \scp{\beta}{\nabla u_{E_\text{in}}-\nabla u_{E_\text{cut}}} \dd x.
\]
With $\beta = (\beta_1,\beta_2)^T$, there holds
\begin{align*}
J_h^{1,E_\text{cut}}&(u_h,u_h) = \eta_{E_\text{cut}} \int_{E_\text{cut}} \nabla \cdot \left( \frac 1 2 \beta (u_{E_\text{in}}-u_{E_\text{cut}})^2 \right) \dd x \\
&= \eta_{E_\text{cut}} \int_{\partial E_\text{cut}} \left( \frac 1 2 \beta (u_{E_\text{in}}-u_{E_\text{cut}})^2 \right) \cdot n \: \dd{s} \\
&= \eta_{E_\text{cut}} \int_{e_\text{out}} \left( \frac 1 2 \beta_2 (u_{E_\text{in}}-u_{E_\text{cut}})^2 \right)\:\dd{s}
- \eta_{E_\text{cut}} \int_{e_\text{in}} \left( \frac 1 2 \beta_1 (u_{E_\text{in}}-u_{E_\text{cut}})^2 \right)\:\dd{s}.
\end{align*}
As $0 \le \eta_{E_\text{cut}} \le 1$, the negative term over the edge $e_\text{in}$ can be compensated with the edge term $\int_{e_\text{in}}\beta_1 \left( \frac 1 2 (u_{E_\text{in}}-u_{E_\text{cut}})^2 \right)\:\dd{s} $ from $a_h^{\text{upw}}$ in \eqref{eq: L2 stab a_h}. For the edge $e_\text{out}$, we collect all terms from $J^{0,E_\text{cut}}$ and $J^{1,E_\text{cut}}$ to get
\begin{align*}
&\eta_{E_\text{cut}} \int_{e_\text{out}} \beta_2\bigg( 
{u_{E_\text{in}}}{u_{E_\text{cut}}} - 
    {u_{E_\text{in}}}{u_{E_\text{out}}} -
    ({u_{E_\text{cut}}})^2 + {u_{E_\text{cut}}}{u_{E_\text{out}}}
    + \frac 1 2 (u_{E_\text{in}}-u_{E_\text{cut}})^2
\bigg) \: \dd{s}\\
&= \eta_{E_\text{cut}} \int_{e_\text{out}} \beta_2
\left( 
\frac 1 2 (u_{E_\text{in}})^2 - \frac 1 2 (u_{E_\text{cut}})^2
-  {u_{E_\text{in}}} {u_{E_\text{out}}} + {u_{E_\text{cut}}}{u_{E_\text{out}}}
\right) \: \dd{s} \\
&= \eta_{E_\text{cut}} \int_{e_\text{out}} 
\frac 1 2 \beta_2 (u_{E_\text{in}} - {u_{E_\text{out}}})^2  \: \dd{s}
- \eta_E \int_{e_\text{out}} 
\frac 1 2 \beta_2 (u_{E_\text{cut}} - {u_{E_\text{out}}})^2 \: \dd{s}.
\end{align*}
The right term in the last line involves the standard jump over edge $e_\text{out}$ and (same as for edge $e_\text{in}$) can be compensated with its positive counterpart in the sum in \eqref{eq: L2 stab a_h}. The first term in the last line consists of a new extended jump involving the difference of the solution
of cell $E_{\text{in}}$ and the solution of cell $E_{\text{out}}$, both evaluated on the outflow edge $e_\text{out}$.
This concludes the proof.
\end{proof}

\section{Numerical results}

In this section, we present numerical results for the linear advection equation in 2d using higher order polynomials for the ramp setup introduced above for different angles $\gamma$, see figure \ref{fig:mesh-geom-intersect}. We choose $\hat{\Omega} = (0,1)^2$ and start the ramp at
$x=0.2001$.
For the definition of the initial data, we use a rotated and shifted coordinate system $(\hat{x},\hat{y})$ that we derive from the standard Cartesian coordinate system $(x,y)$ by
\begin{equation}
    \begin{pmatrix} \hat{x} \\ \hat{y} \end{pmatrix} = \begin{pmatrix}\cos{\gamma}& \sin{\gamma}\\
-\sin{\gamma}&\cos{\gamma}\end{pmatrix}\cdot
\begin{pmatrix}x-0.2001\\
y\end{pmatrix}.
\end{equation}
This newly described coordinate system is defined in such a way that the $\hat{x}$-direction is parallel and the $\hat{y}$-direction is orthogonal to the ramp.  
In this coordinate system, the velocity field $\beta$ and the smooth initial data are given by
\[
\beta(\hat{x},\hat{y}) = 2\begin{pmatrix} 1\\ 0\end{pmatrix}, \quad 
u_0(\hat{x},\hat{y})= \sin\left(\frac{\sqrt{2}\pi \hat{x}} {1-0.2001}\right).
\]
We derive the inflow conditions on $\partial\Omega_{\text{in}}$ from the exact solution. We compute the discrete solution at time $T=0.3$ using polynomials of degrees $p=1,2,3$. In time we use an SSP RK scheme of the same order as the space discretization. We compute the time step $\Delta t$ by
\begin{equation}\label{eq: dt in 2d}
\Delta t \le 0.4 \frac{1}{2p+1} \frac{h}{\lVert\beta\rVert}.
\end{equation}
Here, $h = 1/N$ with $N$ being the number of cells in $x$- and $y$-direction on
$\hat{\Omega}$.

The implementation is based on the DUNE \cite{dune08:1,dune08:2}
framework, the cut-cell DG extension \texttt{dune-udg}
package \cite{duneudg,Bastian_Engwer} and its integration with
\texttt{dune-pdelab}. The geometry
is represented as a discrete level set function, using vertex
values. Based on this
representation the cut cells and their corresponding quadrature rules are
constructed via the TPMC library \cite{tpmc}.

In figure \ref{fig: conv plot}, we show convergence results for ramp angles of $ \gamma=25^{\circ}$ and $\gamma=45^{\circ}$ in the $L^1$ and $L^{\infty}$ norm.
In the $L^1$ norm we observe convergence orders that are (roughly) $p+1$ for polynomials of degree $p$ for both angles. In the $L^{\infty}$ norm, the results are between $p+\frac 1 2$ and $p+1$ with less decay for even polynomial degrees. This is overall consistent with the findings of Giuliani \cite{Giuliani_DG} who reports for the annulus test for the $L^{\infty}$ error orders between $p + \frac 1 2$ and $p+1$ for polynomials of degrees $p=1,\ldots,5$.


\begin{figure}[ht]
    \includegraphics[width=0.49\linewidth]{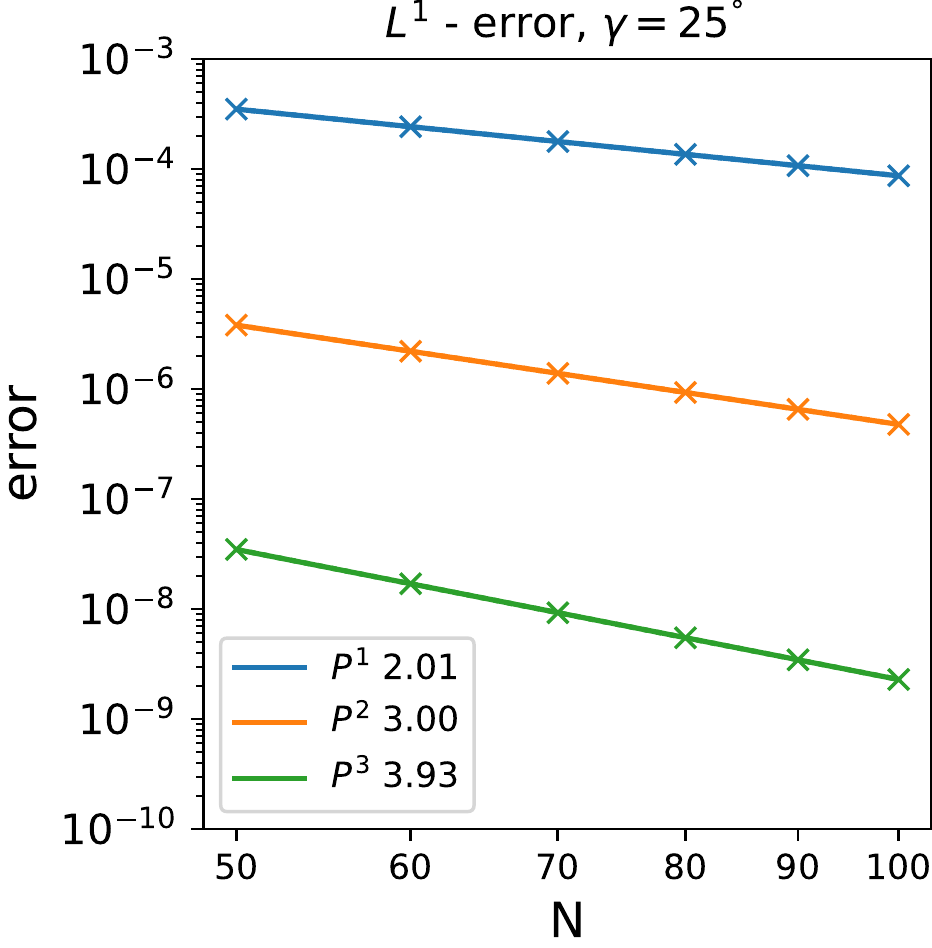}
    \hfill
    \includegraphics[width=0.49\linewidth]{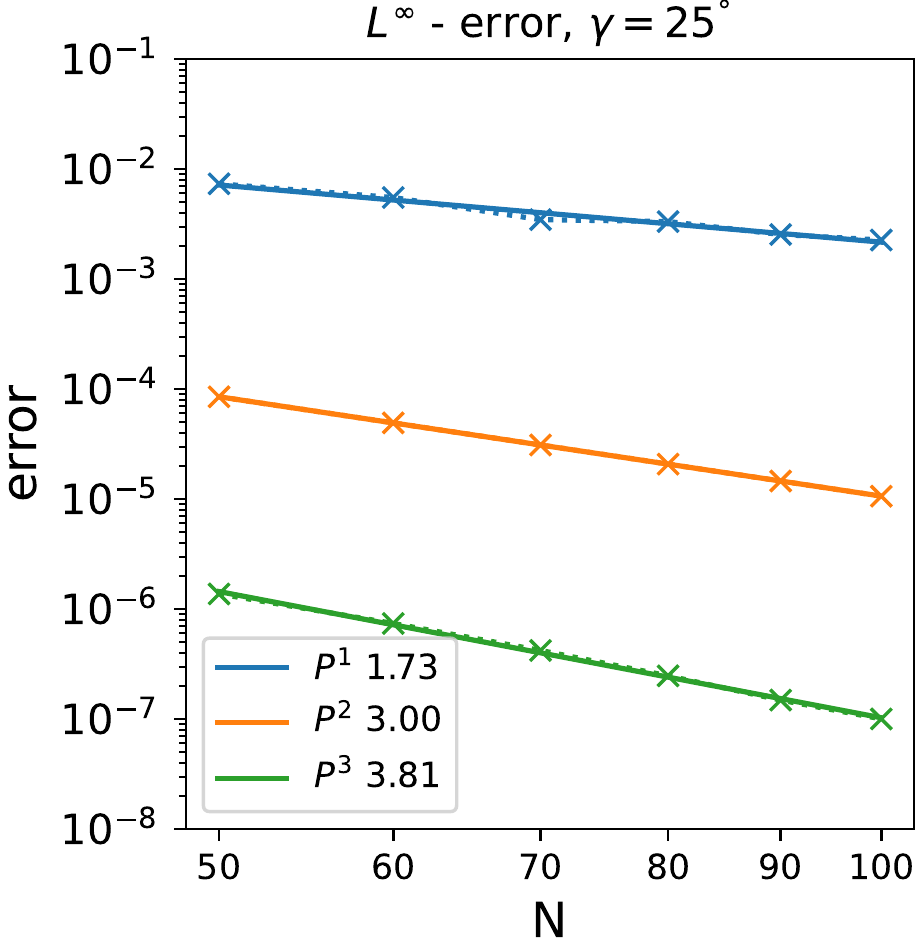}
    \includegraphics[width=0.49\linewidth]{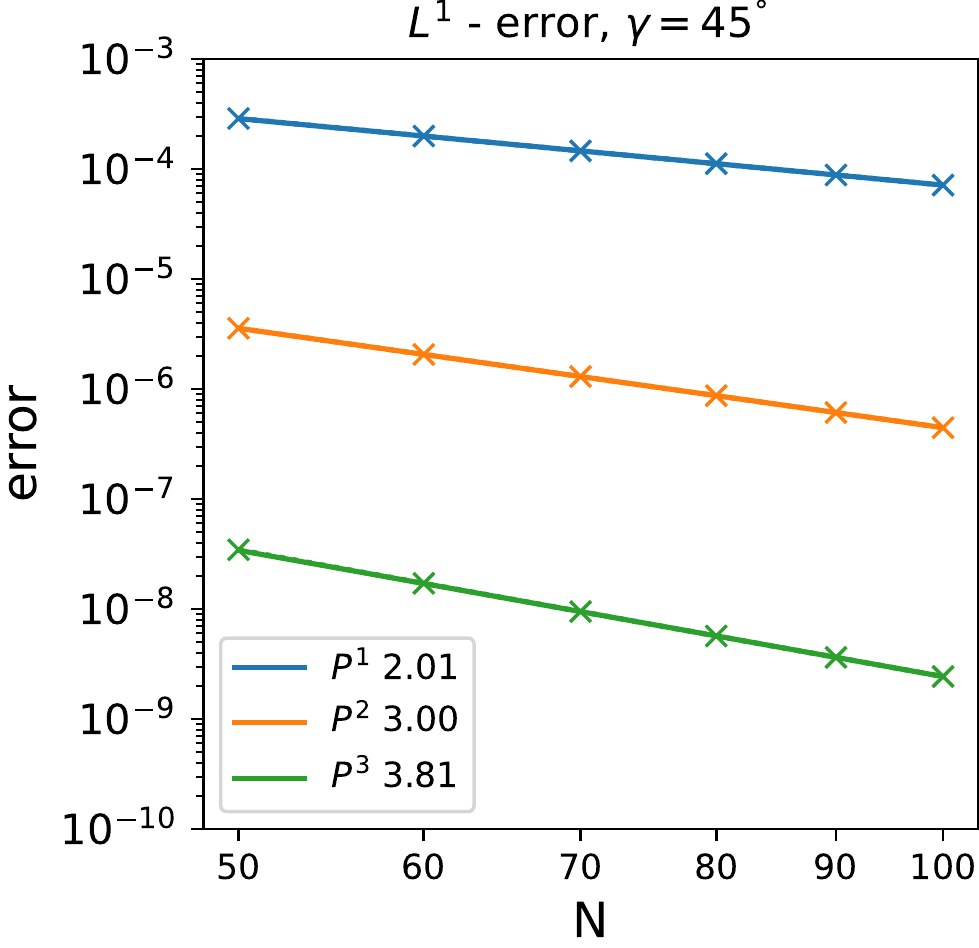}
    \hfill
    \includegraphics[width=0.49\linewidth]{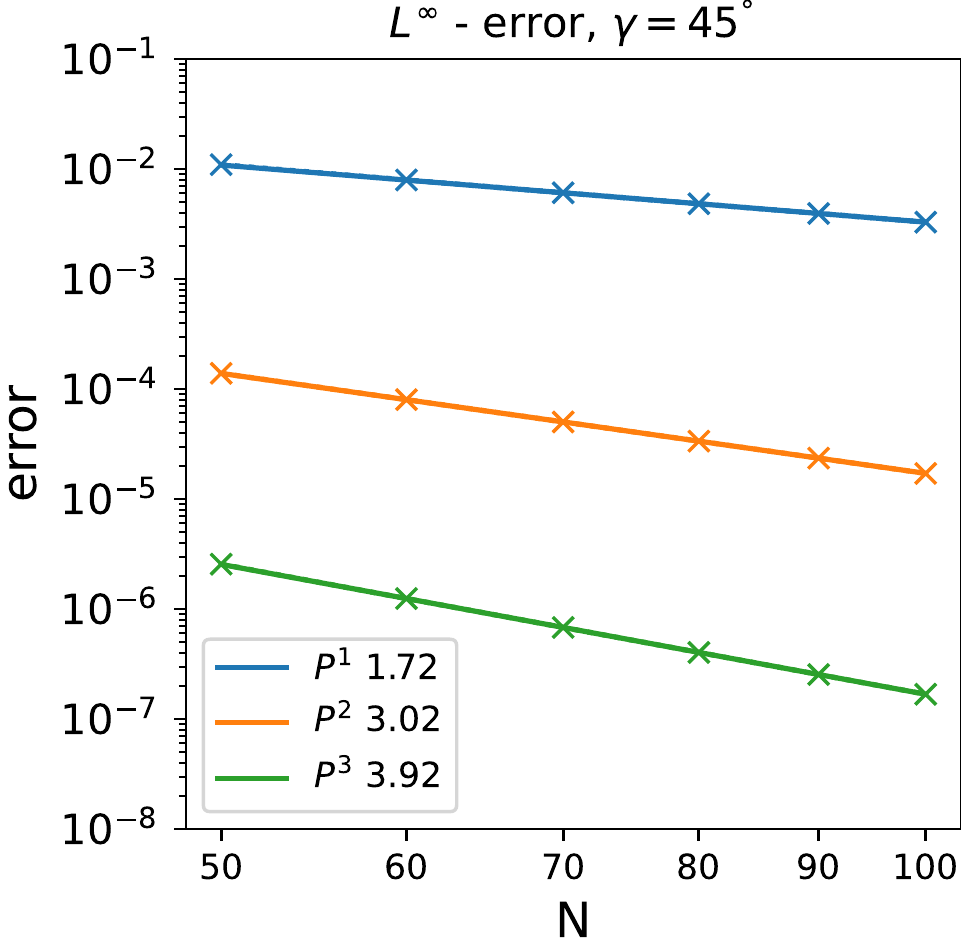}
    \caption{Convergence orders in $L^1$ and $L^\infty$ norm for the error at time $T=0.3$ for a ramp geometry with $\gamma = 25^\circ$ and $\gamma=45^\circ$ and different polynomial degrees $p=1,2,3$.}
    \label{fig: conv plot}
\end{figure}
\FloatBarrier
\section{Conclusion}
In this contribution, we introduce the formulation of the DoD stabilization for the linear advection equation for higher order polynomials. Compared to \cite{DoD_SIAM_2020}, where we only considered linear polynomials, we have augmented the penalty term $J_h^{1,E}$ to also involve the extended test function of the inflow neighbor of a small cut cell. For this new formulation, we show an $L^2$ stability result for the semi-discrete stabilized scheme for the ramp geometry. We also provide numerical results for a smooth test function, showing convergence rates between $p+\frac 1 2$ and $p+1$ for polynomial degree $p$. In the future, we plan to extend the stabilization to non-linear problems in 2d.
\FloatBarrier

\bibliographystyle{spmpsci}
\bibliography{references}

\end{document}